\DeclareMathOperator{\tr}{tr}
\DeclareMathOperator{\End}{End}
\newcommand{\tfss}{\mathring{A}}
\newcommand{\oW}{\overline{W}}
\newcommand{\lp}{\langle}
\newcommand{\rp}{\rangle}
\newcommand{\lv}{\lvert}
\newcommand{\rv}{\rvert}
\newcommand{\bN}{\mathbb{N}}
\newcommand{\bR}{\mathbb{R}}
\def\sideremark#1{\ifvmode\leavevmode\fi\vadjust{\vbox to0pt{\vss
 \hbox to 0pt{\hskip\hsize\hskip1em
 \vbox{\hsize3cm\tiny\raggedright\pretolerance10000
 \noindent #1\hfill}\hss}\vbox to8pt{\vfil}\vss}}}
\newtheorem{theorem}{Theorem}[section]
\newtheorem{proposition}[theorem]{Proposition}
\newtheorem{lemma}[theorem]{Lemma}
\newtheorem{corollary}[theorem]{Corollary}
\numberwithin{equation}{section}
\begin{document}

\title[A rigidity result for catenoids]{A sharp inequality for trace-free matrices with applications to hypersurfaces}
\author{Jeffrey S. Case}
\address{109 McAllister Building \\ Penn State University \\ University Park, PA 16802 \\ USA}
\email{jscase@psu.edu}
\author{Aaron J. Tyrrell}
\address{18A Department of Mathematics and Statistics \\ Texas Tech University \\ Lubbock \\ TX 79409 \\ USA}
\email{aatyrrel@ttu.edu}
\keywords{rotation hypersurface; catenoid; rigidity}
\subjclass[2020]{Primary 53C42; Secondary 26D05, 53A07, 53C24}
\begin{abstract}
 We derive a sharp inequality relating the second and fourth elementary symmetric functions of the eigenvalues of a trace-free matrix and give two applications.
 First, we give a new proof of the classification of conformally flat hypersurfaces in spaceforms.
 Second, we construct a functional which characterizes rotational hypersurfaces and catenoids.
\end{abstract}
\maketitle

\section{Introduction}
\label{sec:intro}

Let $i \colon N^n \to (M^{n+1},g)$, $n \geq 4$, be an immersed hypersurface in a locally conformally flat Riemannian manifold.
This note is motivated by two interesting rigidity results for $N$ in terms of its principal curvatures.
First, Nishikawa and Maeda showed~\cite{NiskikawaMaeda1974} that $(N^n,i^\ast g)$ is locally conformally flat if and only if at each point it has a principal curvature of multiplicity at least $n-1$.
Second, do Carmo and Dajczer showed~\cite{do2012rotation} that, under the stronger assumption that $(M^{n+1},g)$ is complete, simply connected, and of constant sectional curvature $c \in \bR$, the hypersurface $N$ is contained in a so-called rotation hypersurface~\cite{do2012rotation}*{Definition~2.2} if at each point it has a principal curvature of multiplicity exactly $n-1$.
Furthermore, if $N$ is minimal, then it is contained in a catenoid if and only if at each point it has a principal curvature of multiplicity at least $n-1$.

The purpose of this short note is to point out that the condition on the principal curvatures can be recast in terms of a sharp inequality relating the squared-norm $\lvert\tfss^2\rvert^2$ of the square of the trace-free part of the second fundamental form $\tfss$ and the square of the squared-norm $\lvert\tfss\rv^2$ of $\tfss$.
Our main result is a purely algebraic statement about trace-free linear maps on finite-dimensional vector spaces:


\begin{theorem}
    \label{main-thm}
    Let $V$ be an $n$-dimensional inner product space, $n \geq 4$, and let $\tfss \in \End(V)$ be trace-free.
    Then
    \begin{equation*}
        \lvert \tfss^2\rvert^2 \leq \frac{n^2-3n+3}{n(n-1)}\lvert \tfss\rvert^4
    \end{equation*}
    with equality if and only if $\tfss$ has an eigenspace of dimension at least $n-1$.
\end{theorem}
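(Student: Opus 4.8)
The plan is to reduce the statement to an inequality on the eigenvalues of $\tfss$ and then solve the resulting constrained optimization problem. Since the quantities $\lv\tfss\rv^2$ and $\lv\tfss^2\rv^2$ and the eigenspace condition are the natural ones for a self-adjoint operator, and since the application is to the trace-free second fundamental form, I would first restrict to self-adjoint $\tfss$, diagonalize it, and write $\lambda_1,\dots,\lambda_n$ for its eigenvalues. Then $\lv\tfss\rv^2=\sum_i\lambda_i^2$ and $\lv\tfss^2\rv^2=\sum_i\lambda_i^4$, and the trace-free hypothesis reads $\sum_i\lambda_i=0$. As both sides are homogeneous of degree four, I may normalize $\sum_i\lambda_i^2=1$; the problem becomes to maximize $\Phi(\lambda)=\sum_i\lambda_i^4$ over the compact set $\{\sum_i\lambda_i=0,\ \sum_i\lambda_i^2=1\}$, to show the maximum equals $C:=\frac{n^2-3n+3}{n(n-1)}$, and to identify the maximizers.

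Next I would run the Lagrange multiplier analysis. At a maximizer there exist $\alpha,\beta$ with $4\lambda_i^3=\alpha+2\beta\lambda_i$ for every $i$, so each $\lambda_i$ is a root of a fixed cubic and hence the eigenvalues take at most three distinct values; a single value is excluded by $\sum_i\lambda_i=0$ together with the normalization. If there are exactly two distinct values $a,b$ with multiplicities $p,q=n-p$, then $b=-pa/q$ and a direct computation gives
\[
\frac{\sum_i\lambda_i^4}{(\sum_i\lambda_i^2)^2}=\frac{p^3+q^3}{n^2pq}=\frac{n}{pq}-\frac{3}{n},
\]
which is maximized over $p+q=n$ by minimizing $pq$, i.e.\ at $\{p,q\}=\{1,n-1\}$, where it equals exactly $C$. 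This already produces the conjectured extremal value and shows that equality in the two-value case forces an eigenvalue of multiplicity $n-1$.

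The substantial part, and the step I expect to be the main obstacle, is ruling out three distinct eigenvalues at the maximum. This is a genuinely new feature compared with the classical cubic (Okumura-type) inequality, whose Lagrange condition is quadratic and therefore permits only two values. Here the cubic $4t^3-2\beta t-\alpha$ is depressed, so three distinct values $a,b,c$ would be exactly its roots and satisfy $a+b+c=0$; combined with the trace constraint $pa+qb+rc=0$ for the multiplicities $p,q,r$, this pins the values down to be proportional to $(q-r,\,r-p,\,p-q)$. I would then verify the explicit inequality
\[
\frac{p(q-r)^4+q(r-p)^4+r(p-q)^4}{\bigl(p(q-r)^2+q(r-p)^2+r(p-q)^2\bigr)^2}<C
\]
whenever $p,q,r\ge 1$ are distinct with $p+q+r=n$. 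A conceptual explanation for why this case is never extremal, and a useful organizing principle for the computation, is the identity $x^4+y^4+z^4=\tfrac12(x^2+y^2+z^2)^2$ valid for any three reals with $x+y+z=0$: it shows $\Phi$ is constant along the circle obtained by varying a triple of eigenvalues that sums to zero while fixing its sum of squares, so a three-value critical configuration lies on a flat plateau and cannot be a strict maximum. This makes the strict inequality above plausible and guides its proof, but I expect the rigorous confirmation to rest on that explicit estimate.

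Finally I would assemble the equality statement. The above shows the maximal ratio $C$ is attained only at two-value configurations with multiplicities $\{1,n-1\}$, that is, precisely when $\tfss$ has an eigenspace of dimension $n-1$; the degenerate case $\tfss=0$, where both sides vanish and the ratio is indeterminate, is handled separately and corresponds to an eigenspace of dimension $n$. Together these give equality if and only if $\tfss$ has an eigenspace of dimension at least $n-1$, as claimed.
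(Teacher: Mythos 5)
Your approach — diagonalize, normalize, and run a Lagrange-multiplier analysis on the sphere $\{\sum\lambda_i=0,\ \sum\lambda_i^2=1\}$ — is genuinely different from the paper's, which never optimizes directly: it applies the sharp Newton inequality $p_k^2\geq p_{k-1}p_{k+1}$ to $\tfss+\lambda I$ (the $\lambda$-method), first with $k=2$ to get $p_3^2+4p_2^3\leq 0$ and then with $k=3$ to get $p_4+3p_2^2\geq 0$, which is algebraically identical to the stated inequality. Your reduction to at most three distinct eigenvalues, and your computation in the two-value case (the ratio $\tfrac{p^3+q^3}{n^2pq}=\tfrac{n}{pq}-\tfrac{3}{n}$, maximized at $\{p,q\}=\{1,n-1\}$ with value exactly $\tfrac{n^2-3n+3}{n(n-1)}$), are correct, as is the observation that three distinct critical values must be the roots of a depressed cubic and hence proportional to $(q-r,r-p,p-q)$.

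The gap is exactly where you say you expect it: the three-value case is not actually ruled out. You reduce it to the explicit inequality
\begin{equation*}
\frac{p(q-r)^4+q(r-p)^4+r(p-q)^4}{\bigl(p(q-r)^2+q(r-p)^2+r(p-q)^2\bigr)^2}<\frac{n^2-3n+3}{n(n-1)},
\end{equation*}
but you do not prove it, and your "plateau" heuristic does not substitute for a proof: a global maximum of a continuous function on a compact set need not be strict, so showing that a three-value critical configuration sits on a level set of $\Phi$ of positive dimension does not prevent it from attaining the maximal value. (Moreover, rotating one representative of each of the three values along that circle until two coincide does not return you to a two-value configuration of the original type — it generally produces up to five distinct eigenvalues — so the plateau does not obviously connect a three-value critical point to the analyzed case.) A clean way to close this hole within your framework is the second-order Lagrange condition: the Hessian of the Lagrangian is diagonal with entries $12\lambda_i^2-2\beta=g'(\lambda_i)$ for $g(t)=4t^3-2\beta t-\alpha$, and at three distinct real roots $a<b<c$ one has $g'(a)>0$ and $g'(c)>0$; negative semidefiniteness on the codimension-two tangent space then forces the multiplicities of $a$ and $c$ to be $1$ each, leaving only the configuration proportional to $(1,0,\dotsc,0,-1)$, whose ratio is $1/2<\tfrac{n^2-3n+3}{n(n-1)}$ for $n\geq 4$. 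Either that argument or a direct verification of the displayed integer inequality is needed before the proof is complete; the paper's route via Newton's inequalities avoids the case analysis entirely. (Both you and the paper implicitly assume $\tfss$ is self-adjoint so that $\lvert\tfss\rvert^2=\sum\lambda_i^2$ and $\lvert\tfss^2\rvert^2=\sum\lambda_i^4$; that is the relevant setting for the shape operator, so it is not a defect relative to the paper.)
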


The main idea of the proof of \cref{main-thm} is that the Newton inequality relating the first, second, and third elementary symmetric functions of the eigenvalues of a matrix $A$ realizes equality if and only if $A$ is proportional to the identity or has rank $1$.
Applying this to $\tfss + \lambda I$ for $I$ the identity map yields the sharp inequality
\begin{equation*}
    \left(\tr\tfss^3\right)^2 \leq \frac{(n-2)^2}{n(n-1)}\lv\tfss\rv^6 
\end{equation*}
for any trace-free $\tfss \in \End(V^n)$;
this is sometimes called the $\lambda$-method~\cite{Mitrinovic1970}.
Repeating this argument for the Newton inequality relating the second, third, and fourth elementary symmetric functions yields \cref{main-thm}.

We conclude with a number of applications of \cref{main-thm} to the rigidity of locally conformally flat hypersurfaces, of rotation hypersurfaces, and of catenoids.

Our first application is a new proof of the characterization~\cite{NiskikawaMaeda1974} of locally conformally flat hypersurfaces in locally conformally flat manifolds in terms of their principal curvatures.

\begin{corollary}
    \label{lcf-characterization}
    Let $i \colon N^n \to (M^{n+1},g)$, $n \geq 4$, be an immersed hypersurface in a locally conformally flat manifold.
    The induced metric $i^\ast g$ on $N$ is locally conformally flat if and only if for each $p \in N$, the shape operator $A_p \colon T_pN \to T_pN$ has an eigenspace of dimension at least $n-1$.
\end{corollary}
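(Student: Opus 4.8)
The plan is to reduce local conformal flatness of $(N^n, i^\ast g)$ to a pointwise condition on the shape operator via the Weyl tensor and the Gauss equation, and then to recognize that condition as the equality case of \cref{main-thm}. Since $n \geq 4$, the metric $i^\ast g$ is locally conformally flat if and only if its Weyl tensor $W$ vanishes identically, so it suffices to show that $W_p = 0$ if and only if $A_p$ has an eigenspace of dimension at least $n-1$, for each $p \in N$.

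First I would insert the Gauss equation into the curvature tensor of $(N, i^\ast g)$. The ambient curvature $R^M$, restricted to $T_pN$, contributes only through a Kulkarni–Nomizu product of a symmetric $2$-tensor with the metric: because $(M^{n+1}, g)$ is locally conformally flat and $\dim M = n+1 \geq 5$, its own Weyl tensor vanishes, so $R^M$ is the Kulkarni–Nomizu product of the Schouten tensor of $M$ with $g$, and this form is preserved upon restriction to $T_pN$. Any algebraic curvature tensor of this shape has vanishing trace-free part, so it drops out of $W$. The only remaining contribution is the term quadratic in the second fundamental form $h$; splitting $h$ into its trace-free part (the $2$-tensor associated to $\tfss$ via the metric) and its trace, the trace piece again enters only through Kulkarni–Nomizu products with $g$ and so does not affect $W$. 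Hence $W_p$ is, up to a universal constant factor, the trace-free part of the algebraic curvature tensor built from $\tfss$ alone.

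Next I would compute $\lv W_p\rv^2$ explicitly. The orthogonal decomposition of an algebraic curvature tensor into its Weyl, trace-free Ricci, and scalar parts expresses $\lv W_p \rv^2$ as a linear combination of $\lv T \rv^2$, $\lv \operatorname{Ric}(T)\rv^2$, and $\operatorname{scal}(T)^2$, where $T$ is the curvature tensor built from $\tfss$. A direct contraction shows that $\operatorname{Ric}(T)$ is proportional to $\tfss^2$ and $\operatorname{scal}(T)$ to $\lv\tfss\rv^2$, while $\lv T\rv^2$ is a combination of $\lv\tfss\rv^4$ and $\lv\tfss^2\rv^2$. Collecting terms, I expect to find
\begin{equation*}
    \lv W_p\rv^2 = \frac{2n}{n-2}\left(\frac{n^2-3n+3}{n(n-1)}\lv\tfss\rv^4 - \lv\tfss^2\rv^2\right),
\end{equation*}
a strictly positive multiple of the deficit in the inequality of \cref{main-thm}.

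With this identity in hand the conclusion is immediate: $W_p = 0$ if and only if equality holds in \cref{main-thm}, which by that theorem happens if and only if $\tfss_p$ has an eigenspace of dimension at least $n-1$. Since $A_p$ and its trace-free part $\tfss_p$ differ by a multiple of the identity, they share the same eigenspaces, so this is equivalent to $A_p$ having an eigenspace of dimension at least $n-1$. The main obstacle is the bookkeeping in the previous paragraph: keeping the normalization constants of the Kulkarni–Nomizu products and of the Weyl decomposition consistent so that the coefficients of $\lv\tfss\rv^4$ and $\lv\tfss^2\rv^2$ come out in exactly the ratio dictated by \cref{main-thm}. Everything else is a formal consequence of the Gauss equation together with the equivalence, valid for $n \geq 4$, between local conformal flatness and the vanishing of the Weyl tensor.
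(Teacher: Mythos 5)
Your proposal follows essentially the same route as the paper: reduce local conformal flatness to vanishing of the Weyl tensor, use the Gauss equation (with the ambient Weyl tensor vanishing) to express $W$ as the totally trace-free part of the curvature tensor built from $\tfss$, and compute $\lv W\rv^2 = \tfrac{2(n^2-3n+3)}{(n-1)(n-2)}\lv\tfss\rv^4 - \tfrac{2n}{n-2}\lv\tfss^2\rv^2$, which matches the paper's formula exactly; the paper merely packages the trace correction via the Fialkow tensor $F$ rather than the abstract Weyl-part projection. The argument is correct.
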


Our second application is a rigidity result for catenoids in simply connected spaceforms amongst all minimial hypersurfaces.

\begin{corollary}
    \label{catenoid-characterization}
    Let $i \colon N^n \to M^{n+1}(c)$, $n \geq 4$, be a minimal hypersurface in the complete, simply connected Riemannian $(n+1)$-manifold of constant sectional curvature $c \in \bR$.
    Then the shape operator $A \colon TN \to TN$ satisfies
    \begin{equation}
        \label{eqn:catenoid-inequality}
        \lvert A^2 \rvert^2 \leq \frac{n^2-3n+3}{n(n-1)}\lvert A\rvert^4 .
    \end{equation}
    Moreover, equality holds if and only if $i(N)$ is contained in a catenoid.
\end{corollary}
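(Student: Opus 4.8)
The plan is to exploit the fact that minimality makes the shape operator \emph{itself} trace-free, so that \cref{main-thm} applies to $A$ directly without passing to a trace-free part, and then to read off the equality case by invoking the classification of do Carmo and Dajczer recalled in \cref{sec:intro}.

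First I would note that $N$ being minimal means its mean curvature vanishes identically, i.e.\ $\tr A_p = 0$ at every $p \in N$. Thus each $A_p \in \End(T_pN)$ is already trace-free, so its trace-free part coincides with $A_p$. Fixing $p$ and applying \cref{main-thm} to the $n$-dimensional inner product space $V = T_pN$ (with the induced metric $i^\ast g$) and the trace-free endomorphism $\tfss = A_p$ yields the pointwise bound
\[
\lv A_p^2\rv^2 \leq \frac{n^2-3n+3}{n(n-1)}\lv A_p\rv^4 .
\]
Since this holds at every point, it establishes \eqref{eqn:catenoid-inequality} on all of $N$.

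For the equality statement, the equality clause of \cref{main-thm} shows that \eqref{eqn:catenoid-inequality} holds with equality at $p$ if and only if $A_p$ admits an eigenspace of dimension at least $n-1$, that is, if and only if $N$ has a principal curvature of multiplicity at least $n-1$ at $p$. Hence equality holds identically on $N$ precisely when, at each point, $N$ has a principal curvature of multiplicity at least $n-1$. I would then invoke the biconditional of do Carmo and Dajczer~\cite{do2012rotation} for \emph{minimal} hypersurfaces: such an $N$ in $M^{n+1}(c)$ is contained in a catenoid if and only if at each point it has a principal curvature of multiplicity at least $n-1$. Chaining this with the previous sentence identifies the equality locus with the catenoids.

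I expect no genuine analytic obstacle, since all the substantive content is carried by \cref{main-thm} and the cited classification; the corollary is essentially a translation. The one point demanding care is to match the multiplicity conditions exactly: the equality case of \cref{main-thm} produces a principal curvature of multiplicity \emph{at least} $n-1$, and it is precisely the catenoid characterization (the biconditional for minimal hypersurfaces) that uses this hypothesis, as opposed to the rotation-hypersurface statement, which requires multiplicity \emph{exactly} $n-1$. Once this alignment is confirmed, together with the passage from the pointwise equality at each $p$ to the global statement ``contained in a catenoid,'' the proof is complete.
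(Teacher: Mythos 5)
Your proposal is correct and follows exactly the paper's argument: minimality makes $A$ itself trace-free, \cref{main-thm} gives the pointwise inequality together with the eigenspace-of-dimension-at-least-$n-1$ equality condition, and the catenoid conclusion is then read off from the do Carmo--Dajczer classification (their Corollary~4.4). Your remark about matching the ``at least $n-1$'' multiplicity condition to the minimal (catenoid) case rather than the ``exactly $n-1$'' rotation-hypersurface case is precisely the right point of care.
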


The above results motivate the introduction of two energy functionals for hypersurfaces of a Riemannian manifold.
First, given an immersion $i \colon N^n \to (M^{n+1},g)$, we define the \emph{rotational energy} of $N$ by
\begin{equation*}
    E_{rot}[N] := \int_N \left( \frac{n^2-3n+3}{n(n-1)}\lvert \tfss\rvert^4 - \lvert \tfss^2 \rvert^2 \right) \, dA ,
\end{equation*}
assuming it is finite.
Our terminology is motivated by the characterization~\cite{do2012rotation} of rotation hypersurfaces in simply connected spaceforms.

\begin{corollary}
    \label{rotation-hypersurface-characterization}
    Let $i \colon N^n \to M^{n+1}(c)$, $n \geq 4$, be a nowhere umbilic immersed hypersurface in the complete, simply connected, Riemannian $(n+1)$-manifold of constant sectional curvature $c \in \bR$.
    Then $E_{rot}[N] \geq 0$ with equality if and only if $N$ is contained in a rotation hypersurface.
\end{corollary}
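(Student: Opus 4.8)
The plan is to reduce the statement to a pointwise application of \cref{main-thm} together with the characterization of rotation hypersurfaces from \cite{do2012rotation}. First I would fix $p \in N$ and view the trace-free part $\tfss_p = A_p - H_p I$ of the shape operator, where $H_p = \tfrac1n \tr A_p$, as a trace-free endomorphism of the $n$-dimensional inner product space $T_pN$. Since $n \geq 4$, \cref{main-thm} applies and gives
\[
    \lv \tfss_p^2\rv^2 \leq \frac{n^2-3n+3}{n(n-1)}\lv \tfss_p\rv^4 ,
\]
so the integrand defining $E_{rot}[N]$ is pointwise nonnegative. Integrating over $N$ yields $E_{rot}[N] \geq 0$ immediately.

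For the equality case, I would use that this integrand is a continuous, nonnegative function on $N$, so $E_{rot}[N] = 0$ forces it to vanish identically. By the equality clause of \cref{main-thm}, this holds exactly when $\tfss_p$ admits an eigenspace of dimension at least $n-1$ for every $p$. Because $A_p = \tfss_p + H_p I$, the operators $A_p$ and $\tfss_p$ share the same eigenspaces, so the condition is equivalent to $A_p$ having a principal curvature of multiplicity at least $n-1$ at each point.

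The next step is to upgrade ``at least $n-1$'' to ``exactly $n-1$'' using the nowhere-umbilic hypothesis: a principal curvature of multiplicity $n$ would force all eigenvalues of $A_p$ to coincide, i.e.\ $p$ would be umbilic, which is excluded. Hence $E_{rot}[N] = 0$ if and only if $N$ has, at each point, a principal curvature of multiplicity exactly $n-1$. Invoking the characterization of \cite{do2012rotation}---that a nowhere-umbilic hypersurface of $M^{n+1}(c)$ is contained in a rotation hypersurface precisely when it has a principal curvature of multiplicity $n-1$ at every point---then yields the asserted equivalence.

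I expect the only delicate point to be the careful use of the nowhere-umbilic assumption, which is exactly what rules out the degenerate multiplicity-$n$ case and makes the do Carmo--Dajczer dichotomy applicable in both directions. In particular I would be sure to record the converse half of their characterization, namely that a nowhere-umbilic rotation hypersurface does have a principal curvature of multiplicity $n-1$ at each point, since this is what is needed for the ``if'' direction of the equality statement.
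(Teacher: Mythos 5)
Your proposal is correct and follows essentially the same route as the paper: apply \cref{main-thm} pointwise to the trace-free shape operator to get nonnegativity of the integrand, use the nowhere-umbilic hypothesis to upgrade the equality condition from an eigenspace of dimension at least $n-1$ to exactly $n-1$, and then invoke the do Carmo--Dajczer characterization (their Theorem~4.2). The paper's proof is just a terser version of the same argument.
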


This result is sharper for minimal immersions.

\begin{corollary}
    \label{integral-catenoid-characterization}
    Let $i \colon N^n \to M^{n+1}(c)$, $n \geq 4$, be a minimal immersed hypersurface in the complete, simply connected, Riemannian $(n+1)$-manifold of constant sectional curvature $c \in \bR$.
    Then $E_{rot}[N] \geq 0$ with equality if and only if $N$ is contained in a catenoid.
\end{corollary}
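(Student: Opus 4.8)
The plan is to reduce the statement to the pointwise inequality of \cref{main-thm} together with the equality characterization of \cref{catenoid-characterization}. First I would observe that minimality of $i$ means the mean curvature vanishes, so that $\tr A_p = 0$ and hence the shape operator is trace-free at every point; consequently its trace-free part satisfies $\tfss = A$, and the integrand defining $E_{rot}$ coincides with
\begin{equation*}
    \frac{n^2-3n+3}{n(n-1)}\lv A\rv^4 - \lv A^2\rv^2 .
\end{equation*}
Applying \cref{main-thm} with $V = T_pN$ and the trace-free map $A_p \in \End(T_pN)$ shows that this integrand is nonnegative at every point, so $E_{rot}[N]$ is a well-defined element of $[0,\infty]$ and in particular $E_{rot}[N] \geq 0$.

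For the equality statement, I would use that the integrand is a smooth, hence continuous, nonnegative function on $N$. A nonnegative continuous function with vanishing integral vanishes identically, so $E_{rot}[N] = 0$ if and only if equality holds in \cref{main-thm} at every point $p \in N$. By the equality characterization in \cref{main-thm}, this is in turn equivalent to $A_p$ admitting an eigenspace of dimension at least $n-1$ for every $p$.

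Finally, I would invoke \cref{catenoid-characterization}: for a minimal immersion into the simply connected spaceform $M^{n+1}(c)$, equality in \eqref{eqn:catenoid-inequality} at every point---equivalently, the pointwise eigenspace condition above---holds if and only if $i(N)$ is contained in a catenoid. Combining this with the previous paragraph yields that $E_{rot}[N] = 0$ if and only if $i(N)$ is contained in a catenoid, completing the argument.

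Since all the analytic content is already packaged in \cref{main-thm} and \cref{catenoid-characterization}, I do not anticipate a serious obstacle. The one point requiring care is the passage from a vanishing integral to pointwise vanishing of the integrand: this relies on the integrand being both nonnegative (from \cref{main-thm}) and continuous (from smoothness of the immersion), and is precisely where the hypotheses that $i$ is a smooth minimal immersion enter. If $N$ is noncompact the same argument applies verbatim, since strict positivity of the integrand at a single point would, by continuity, force a positive integral.
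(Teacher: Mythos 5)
Your proposal is correct and follows essentially the same route as the paper, which simply reduces the statement to \cref{catenoid-characterization}: pointwise nonnegativity of the integrand gives $E_{rot}[N]\geq 0$, and vanishing of the integral forces pointwise equality (by continuity and nonnegativity), which is exactly the equality case of \cref{catenoid-characterization}. Your write-up just makes explicit the steps the paper leaves implicit.
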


Second, given an immersion $i \colon N^n \to (M^{n+1},g)$, we define the \emph{conformal rotational energy} of $N$ by
\begin{equation*}
    E_{rot}^{conf}[N] := \int_N \lv \tfss\rv^{n-4}\left( \frac{n^2-3n+3}{n(n-1)}\lvert \tfss\rvert^4 - \lvert \tfss^2 \rvert^2 \right) \, dA ,
\end{equation*}
assuming it is finite.
Note that $E_{rot}^{conf} = E_{rot}$ for four-dimensional hypersurfaces.
The main point is of this definition is that $E_{rot}^{conf}$ is conformally invariant, and hence gives a conformally invariant characterization of immersed submanifolds for which the shape operator has an eigenspace of dimension at least $n-1$.

\begin{corollary}
    \label{conformally-invariant-rotational-characterization}
    Let $i \colon N^n \to (M^{n+1},g)$, $n \geq 4$, be an immersed hypersurface in a Riemannian manifold.
    Then $E_{rot}^{conf}[N] \geq 0$ with equality if and only if for each $p \in N$, the shape operator $A_p \colon T_pN \to T_pN$ has an eigenspace of dimension at least $n-1$.
\end{corollary}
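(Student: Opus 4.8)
The plan is to reduce the statement to the pointwise inequality of \cref{main-thm} applied fiberwise to the trace-free second fundamental form. At each $p \in N$, the induced metric $i^\ast g$ makes $T_pN$ an $n$-dimensional inner product space and the shape operator $A_p$ self-adjoint, so $\tfss = A_p - \tfrac1n(\tr A_p)\,I \in \End(T_pN)$ is trace-free and self-adjoint, and \cref{main-thm} applies. Writing
\[
    Q(p) := \frac{n^2-3n+3}{n(n-1)}\lv\tfss\rv^4 - \lv\tfss^2\rv^2
\]
for the quantity in parentheses, \cref{main-thm} gives $Q(p) \geq 0$ for every $p$, with $Q(p) = 0$ if and only if $\tfss$ has an eigenspace of dimension at least $n-1$.

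Since $n \geq 4$, the weight $\lv\tfss\rv^{n-4}$ is non-negative as well, so the integrand $\lv\tfss\rv^{n-4}\,Q(p)$ is pointwise non-negative and $E_{rot}^{conf}[N] \geq 0$ follows at once. Note that the conformal invariance advertised above plays no role in establishing the inequality itself, which is purely pointwise. Assuming $i$ is regular enough that $\tfss$, and hence the integrand, is continuous, the condition $E_{rot}^{conf}[N] = 0$ is, for a non-negative continuous integrand, equivalent to that integrand vanishing identically on $N$.

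It then remains to identify the zero set of the integrand, and the only point requiring care is the weight $\lv\tfss\rv^{n-4}$, which for $n > 4$ vanishes at umbilic points and could a priori mask the value of $Q$. I would rule this out by observing that $\lv\tfss\rv^{n-4} = 0$ forces $\tfss = 0$, whence $Q(p) = 0$ automatically, while at points with $\tfss \neq 0$ the weight is strictly positive; for $n = 4$ the weight is identically $1$. Thus in every case the integrand vanishes at $p$ exactly when $Q(p) = 0$, i.e., by the equality clause of \cref{main-thm}, exactly when $\tfss$ has an eigenspace of dimension at least $n-1$. Finally, since $A_p$ and $\tfss = A_p - \tfrac1n(\tr A_p)\,I$ differ by a multiple of the identity they have identical eigenspaces, so this is the same as requiring $A_p$ to have an eigenspace of dimension at least $n-1$, which is the asserted characterization. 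The only genuine work is the bookkeeping around the weight in the equality case; the rest is a direct consequence of \cref{main-thm}.
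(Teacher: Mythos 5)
Your proposal is correct and follows essentially the same route as the paper: apply \cref{main-thm} pointwise to the trace-free part $\tfss$ of the shape operator, observe the integrand is non-negative so equality forces it to vanish identically, and then split into the cases $\tfss_p = 0$ (where $A_p$ is umbilic and has an $n$-dimensional eigenspace) and $\tfss_p \neq 0$ (where the positive weight forces equality in \cref{main-thm}). Your explicit handling of the weight $\lv\tfss\rv^{n-4}$ and of the fact that $A_p$ and $\tfss_p$ share eigenspaces is slightly more detailed than the paper's, but the argument is the same.
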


This note is organized as follows.
In \cref{sec:symmetric} we give an elementary proof of \cref{main-thm}.
In \cref{sec:applications} we prove our geometric applications of \cref{main-thm}.

\section{Elementary symmetric functions}
\label{sec:symmetric}

Let $A \in \End(V^n)$ be a linear map on an $n$-dimensional vector space.
Denote by
\begin{equation}
    \label{eqn:defn-sigmak}
    \sigma_k(A) := \sum_{i_1 < \dotsm < i_k} \lambda_{i_1} \dotsm \lambda_{i_k}
\end{equation}
the $k$-th elementary symmetric function of the eigenvalues $\{ \lambda_1, \dotsc, \lambda_n\}$ of $A$, with the convention $\sigma_0(A)=1$.
Note that $\sigma_1(A) = \tr A$.
Define $p_k(A)$, $k \leq n$, by
\begin{equation*}
 \sigma_k(A) = \binom{n}{k}p_k(A) ;
\end{equation*}
this renormalization is such that $p_k(\lambda I) = \lambda^k$, where $I$ is the identity map.
The main tool in the proof of \cref{main-thm} is the sharp version of Newton's inequalities:

\begin{lemma}[\cite{Rosset1989}*{Section~2}]
 \label{newton}
 Let $A \in \End(V^n)$ and let $k \leq n-1$ be a positive integer.
 Then
 \begin{equation*}
  p_k^2(A) \geq p_{k-1}(A) p_{k+1}(A)
 \end{equation*}
 with equality if and only if $A$ is proportional to the identity or $\dim\ker A \geq n-k+1$.
\end{lemma}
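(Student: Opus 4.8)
The plan is to prove the inequality and its equality case together, by induction on $n$, using the generating polynomial of the eigenvalues and Rolle's theorem. Since $A$ is self-adjoint in our application, I assume its eigenvalues $\lambda_1,\dotsc,\lambda_n$ are real; equivalently, the monic polynomial $f(x)=\prod_{i=1}^n(x-\lambda_i)=\sum_{j=0}^n(-1)^j\binom{n}{j}p_j(A)\,x^{n-j}$ splits over $\bR$. Two operations drive the induction and preserve both real-rootedness and the normalized symmetric functions in a controlled way. \emph{Differentiation}: by Rolle's theorem $f'$ again splits over $\bR$, and since $\binom{n}{j}(n-j)=n\binom{n-1}{j}$ the polynomial $\tfrac1n f'$ is monic of degree $n-1$ with normalized symmetric functions $p_0,\dotsc,p_{n-1}$, so differentiation discards $p_n$ and leaves the remaining $p_j$ untouched. \emph{Reversal}: $x^n f(1/x)$ splits over $\bR$ (its roots are the reciprocals $1/\lambda_i$) and sends $p_i\mapsto p_{n-i}/p_n$.

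For the inequality I would induct on $n$, with base case $n=2$: the real-rooted quadratic $x^2-2p_1x+p_2$ has nonnegative discriminant, giving $p_1^2\ge p_0p_2$ with equality if and only if $\lambda_1=\lambda_2$. For $1\le k\le n-2$, the polynomial $\tfrac1n f'$ retains the three values $p_{k-1},p_k,p_{k+1}$, so the claim for $A$ follows at once from the inductive hypothesis in dimension $n-1$. The remaining top case $k=n-1$ is converted to the bottom case $k=1$ by a single reversal when $\sigma_n\neq0$; when $\sigma_n=0$ the inequality reads $p_{n-1}^2\ge0$ and is automatic. This yields $p_k^2\ge p_{k-1}p_{k+1}$.

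The equality statement is the substantive part. The \emph{if} direction is a one-line check: $A=cI$ gives $p_j=c^j$, hence $p_k^2=p_{k-1}p_{k+1}$; and $\dim\ker A\ge n-k+1$ forces $\sigma_k=\sigma_{k+1}=0$, hence $p_k=p_{k+1}=0$. For the \emph{only if} direction I would run the same induction. For $k\le n-2$, equality for $A$ is equivalent to equality for $\tfrac1n f'$ (the three relevant $p_j$ agree), so by the inductive hypothesis the roots of $f'$ are either all equal or include $0$ with multiplicity at least $n-k$. I then pull this back by integrating $f'$: if all roots of $f'$ coincide then $f=(x-a)^n+C$, and real-rootedness forces $C=0$ for $n\ge3$, so $A\propto I$; if $0$ is a zero of $f'$ of order $\ge n-k\ge2$, then by Rolle a critical point lying strictly between roots of $f$ is simple, so $0$ must itself be a root of $f$, whence $x^{n-k+1}\mid f$ and $\dim\ker A\ge n-k+1$. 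The case $k=n-1$ follows from this by reversal.

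The main obstacle I anticipate is exactly this backward pull-back in the equality analysis. Because differentiation collapses many distinct root configurations onto the same derivative, I must use Rolle's theorem to argue that equality is never \emph{created} under differentiation, and I must track the multiplicity of the zero eigenvalue with care—including the degenerate behaviour of the reversal when $\sigma_n=0$—in order to recover precisely the sharp bound $\dim\ker A\ge n-k+1$ rather than a weaker conclusion.
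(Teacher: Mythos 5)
The paper does not prove this lemma at all: it is quoted from Rosset's article with a citation, so there is no in-paper argument to compare against. Your proof is the classical derivative-and-reversal argument for Newton's inequalities (essentially the one in the cited reference), and it is correct, including the equality analysis: the identities $\binom{n}{j}(n-j)=n\binom{n-1}{j}$ and $p_i\mapsto p_{n-i}/p_n$ are right, equality at index $k\le n-2$ for $f$ is literally the same equation as for $\tfrac1n f'$, the real-rootedness of $(x-a)^n+C$ does force $C=0$ for $n\ge 3$, and the multiplicity count ($f'$ vanishes to order exactly $r_i-1$ at each root of $f$ plus one simple root per gap, totalling $n-1$) justifies your claim that a zero of $f'$ of order $\ge 2$ must be a zero of $f$ of one higher order, which yields the sharp bound $\dim\ker A\ge n-k+1$. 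Your explicit standing assumption that the eigenvalues are real is necessary (the inequality fails for, say, a rotation in the plane) and is implicit in the paper, where the lemma is only ever applied to self-adjoint operators; note that for the equality case one also needs diagonalizability to pass from ``all eigenvalues equal'' to $A\propto I$ and from algebraic multiplicity of $0$ to $\dim\ker A$, which self-adjointness provides. The only point I would ask you to write out fully is the $k=n-1$ equality case when $\sigma_n=0$, where reversal is unavailable: there equality forces $\sigma_{n-1}=0$ as well, and $\sigma_n=\sigma_{n-1}=0$ with real eigenvalues forces at least two of them to vanish, giving $\dim\ker A\ge 2$ as required; you flag this but do not carry it out.
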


Applying the $\lambda$-method to \cref{newton} with $k=2$ and $k=3$ yields useful inequalities, the second of which is recorded in \cref{main-thm}.
We record the resulting inequalities separately due to their different dependence on $n = \dim V$.

Our first inequality relates $p_2(\tfss)$ and $p_3(\tfss)$ for a trace-free linear map $\tfss$ on a vector space of dimension at least $3$.

\begin{proposition}
 \label{first-prop}
 Let $\tfss \in \End(V^n)$, $n \geq 3$, be such that $p_1(\tfss)=0$.
 Then
 \begin{equation*}
    p_3^2(\tfss) + 4p_2^3(\tfss) \leq 0 .
 \end{equation*}
 Moreover, equality holds if and only if $\tfss$ has an eigenspace of dimension at least $n-1$.
\end{proposition}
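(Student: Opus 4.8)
The plan is to apply the $\lambda$-method to \cref{newton} with $k=2$, as suggested in the introduction. First I would record how the normalized symmetric functions transform under the shift $\tfss \mapsto \tfss + \lambda I$. Since the eigenvalues shift by $\lambda$, one has $p_k(\tfss + \lambda I) = \sum_{j=0}^{k}\binom{k}{j}p_j(\tfss)\lambda^{k-j}$; using $p_1(\tfss)=0$ this yields $p_1(\tfss+\lambda I)=\lambda$, $p_2(\tfss+\lambda I)=\lambda^2+p_2(\tfss)$, and $p_3(\tfss+\lambda I)=\lambda^3+3p_2(\tfss)\lambda+p_3(\tfss)$.

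Next I would feed these into the case $k=2$ of \cref{newton}, namely $p_2^2 \ge p_1p_3$, applied to $A=\tfss+\lambda I$. Writing $p_2=p_2(\tfss)$ and $p_3=p_3(\tfss)$ and expanding, the quartic terms cancel and the inequality collapses to the scalar quadratic inequality
\[
 g(\lambda) := -p_2\lambda^2 - p_3\lambda + p_2^2 \ge 0 \qquad \text{for all } \lambda \in \bR .
\]
Comparing leading coefficients forces $p_2 \le 0$. When $p_2 < 0$, $g$ is an upward-opening parabola that is nonnegative everywhere, so its discriminant is nonpositive; since the discriminant of $g$ equals $p_3^2 + 4p_2^3$, this is exactly the claimed inequality. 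When $p_2 = 0$, the requirement $g(\lambda) = -p_3\lambda \ge 0$ for all $\lambda$ forces $p_3 = 0$, and again $p_3^2 + 4p_2^3 = 0 \le 0$.

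For the equality statement I would track when the discriminant vanishes. The key observation is that $p_3^2 + 4p_2^3 = 0$ holds if and only if there is some $\lambda^\ast \in \bR$ with $g(\lambda^\ast) = 0$, which is precisely the assertion that \cref{newton} holds with equality for the shifted map $\tfss + \lambda^\ast I$ (when $p_2<0$ this $\lambda^\ast$ is the vertex $-p_3/(2p_2)$, and when $p_2=0$ one may take $\lambda^\ast=0$ since $g(0)=p_2^2=0$). Invoking the equality clause of \cref{newton}, either $\tfss + \lambda^\ast I$ is proportional to $I$—whence $\tfss=0$ by trace-freeness, so $\tfss$ has an $n$-dimensional eigenspace—or $\dim\ker(\tfss+\lambda^\ast I)\ge n-1$, i.e. $\tfss$ has eigenvalue $-\lambda^\ast$ with an eigenspace of dimension at least $n-1$. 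Conversely, if $\tfss$ has an eigenspace of dimension at least $n-1$ for some eigenvalue $a$, then $\dim\ker(\tfss-aI)\ge n-1$, so \cref{newton} holds with equality for $\tfss-aI$, giving $g(-a)=0$ and hence equality above.

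I expect the only delicate point to be this equality analysis: making sure the optimal shift $\lambda^\ast$ extracted from the discriminant genuinely realizes equality in \cref{newton}, and handling the degenerate case $p_2=0$ so that the conclusion $\dim\ker \ge n-1$ (and not merely $\tfss$ nilpotent) survives. Everything else reduces to the routine shift computation and a one-variable discriminant argument.
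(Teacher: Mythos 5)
Your proof is correct and takes essentially the same route as the paper: apply \cref{newton} with $k=2$ to the shifted map $\tfss+\lambda I$ and optimize over $\lambda$, tracing equality back through the equality clause of the lemma. The only cosmetic difference is that the paper multiplies the resulting quadratic in $\lambda$ by $4p_2$ and completes the square, which is equivalent to your discriminant computation.
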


\begin{proof}
 It is well-known, and follows easily from Equation~\eqref{eqn:defn-sigmak}, that
    \begin{equation}
         \label{eqn:recursive}
         p_k(A + \lambda I) = \sum_{j=0}^k \binom{k}{j}\lambda^jp_{k-j}(A)
    \end{equation}
    for all $A \in \End(V^n)$ and all $\lambda \in \bR$.

    Now let $\tfss \in \End(V^n)$ be such that $p_1(\tfss)=0$.
    \Cref{newton} implies that $p_2(\tfss) \leq 0$ with equality if and only if $\tfss = 0$.
    Since the conclusion is trivially true if $\tfss=0$, we may assume that $p_2(\tfss)<0$.
    For notational simplicity, denote $p_k := p_k(\tfss)$.

    Let $\lambda \in \bR$.
    Applying \cref{newton} to $\tfss+\lambda I$ and simplifying via Equation~\eqref{eqn:recursive} yields
    \begin{equation}
        \label{eqn:lambda-step1}
        \begin{split}
            0 & \leq ( p_2 + \lambda^2)^2 - \lambda(p_3 + 3\lambda p_2 + \lambda^3) \\
            & = p_2^2 - \lambda p_3 - \lambda^2p_2 ,
        \end{split}
    \end{equation}
    with equality if and only if $\tfss$ has an eigenspace of dimension at least $n-1$.
    In fact, since $p_1=0$ and $p_2<0$, the dimension must equal $n-1$ in the case of equality in Inequality~\eqref{eqn:lambda-step1}.
    Multiplying Inequality~\eqref{eqn:lambda-step1} by $4p_2 < 0$ yields
    \begin{align*}
        0 & \geq 4p_2^3 - 4\lambda p_2p_3 - 4\lambda^2p_2^2 \\
        & = -(2\lambda p_2 + p_3)^2 + 4p_2^3 + p_3^2 .
    \end{align*}
    Therefore $p_3^2 + 4p_2^3 \leq 0$ with equality if and only if $\tfss$ has an eigenspace of dimension $n-1$.
\end{proof}

Our second inequality relates $p_2(\tfss)$ and $p_4(\tfss)$ for a trace-free linear map $\tfss$ on a vector space of dimension at least $4$.

\begin{proposition}
 \label{main-prop}
 Let $\tfss \in \End(V^n)$, $n \geq 4$, be such that $p_1(\tfss)=0$.
 Then
 \begin{equation*}
    p_4(\tfss) + 3p_2^2(\tfss) \geq 0 .
 \end{equation*}
 Moreover, equality holds if and only if $\tfss$ has an eigenspace of dimension at least $n-1$.
\end{proposition}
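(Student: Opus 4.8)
The plan is to mirror the proof of \cref{first-prop}, now invoking \cref{newton} with $k=3$, and then to feed the resulting inequality back into \cref{first-prop} itself. First I would dispose of the trivial case exactly as before: \cref{newton} with $k=1$ gives $p_2(\tfss)\leq 0$, with equality if and only if $\tfss=0$, in which case the conclusion is immediate. So I may assume $p_2<0$ and, as in \cref{first-prop}, abbreviate $p_k:=p_k(\tfss)$.

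Next I would apply \cref{newton} with $k=3$ to $\tfss+\lambda I$ and simplify using Equation~\eqref{eqn:recursive} together with $p_1=0$. Since $p_2(\tfss+\lambda I)=p_2+\lambda^2$, $p_3(\tfss+\lambda I)=p_3+3\lambda p_2+\lambda^3$, and $p_4(\tfss+\lambda I)=p_4+4\lambda p_3+6\lambda^2 p_2+\lambda^4$, the inequality $p_3^2\geq p_2p_4$ for $\tfss+\lambda I$ reads
\begin{equation*}
    Q(\lambda):=(p_3+3\lambda p_2+\lambda^3)^2-(p_2+\lambda^2)(p_4+4\lambda p_3+6\lambda^2 p_2+\lambda^4)\geq 0,
\end{equation*}
and this holds for every $\lambda\in\bR$.

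The crucial step is to evaluate $Q$ at the distinguished value $\lambda_0=-p_3/(2p_2)$, which is precisely where the completed square $-(2\lambda p_2+p_3)^2$ in the proof of \cref{first-prop} vanishes. The main computational task is to check that this choice collapses the quartic into the factored form
\begin{equation*}
    Q(\lambda_0)=\left(-\frac{p_3^2+4p_2^3}{4p_2^2}\right)\left(p_4-\frac{3p_3^2}{4p_2}\right).
\end{equation*}
By \cref{first-prop} the first factor is nonnegative, so when it is strictly positive the inequality $Q(\lambda_0)\geq 0$ forces $p_4\geq 3p_3^2/(4p_2)$. Finally, dividing the inequality $p_3^2+4p_2^3\leq 0$ of \cref{first-prop} by $4p_2<0$ gives $3p_3^2/(4p_2)\geq -3p_2^2$, and chaining the two bounds yields $p_4+3p_2^2\geq 0$.

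For the equality statement, the step I expect to require the most care is that \cref{newton} with $k=3$ only detects $\dim\ker(\tfss+\lambda I)\geq n-2$ in its own equality case, which would give an eigenspace of dimension $n-2$ and is therefore too weak. I would instead route the equality through \cref{first-prop}: a strictly positive first factor $-(p_3^2+4p_2^3)/(4p_2^2)$ makes the bound $3p_3^2/(4p_2)\geq -3p_2^2$ strict, hence $p_4+3p_2^2>0$; so equality forces $p_3^2+4p_2^3=0$, which is exactly equality in \cref{first-prop} and thus yields an eigenspace of dimension at least $n-1$. Conversely, for $\tfss$ with eigenvalues $\mu$ of multiplicity $n-1$ and $-(n-1)\mu$, a short computation gives $p_2=-\mu^2$ and $p_4=-3\mu^4$, so $p_4+3p_2^2=0$. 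The genuine obstacles are therefore verifying the factorization of $Q(\lambda_0)$ and correctly inheriting the equality case from \cref{first-prop} rather than from the $k=3$ Newton inequality.
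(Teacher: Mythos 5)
Your proposal is correct and takes essentially the same approach as the paper: apply \cref{newton} with $k=3$ to $\tfss+\lambda I$, specialize to $\lambda_0=-p_3/(2p_2)$ to obtain the factored inequality (equivalent to the paper's $(3p_3^2-4p_2p_4)(p_3^2+4p_2^3)\leq 0$), and derive both the inequality and the equality case from \cref{first-prop} rather than from the equality case of Newton's inequality for $k=3$. The only differences are cosmetic: you evaluate $Q$ at $\lambda_0$ directly where the paper multiplies by $16p_2^3$ and substitutes $s=2\lambda p_2+p_3$ before setting $s=0$, and your equality-case computation with eigenvalues $\mu$ and $-(n-1)\mu$ matches the paper's computation $p_k=1-k$ after normalization.
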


\begin{proof}
    Let $\tfss \in \End(V^n)$ be such that $p_1(\tfss)=0$.
    For notational simplicity, denote $p_k := p_k(\tfss)$.
    \Cref{newton} implies that $p_2(\tfss)\leq0$ with equality if and only if $\tfss=0$.
    Since the conclusion is trivially true if $\tfss=0$, we may assume that $p_2<0$.

    Now, applying \cref{newton} and Equation~\eqref{eqn:recursive} to $\tfss+\lambda I$ yields
    \begin{align*}
        0 & \leq (p_3 + 3\lambda p_2 + \lambda^3)^2 - (p_2 + \lambda^2)(p_4 + 4\lambda p_3 + 6\lambda^2p_2 + \lambda^4) \\
        & = p_3^2 - p_2p_4 + 2\lambda p_2p_3 + \lambda^2(3p_2^2 - p_4) - 2\lambda^3 p_3 - \lambda^4 p_2
    \end{align*}
    for all $\lambda \in \bR$.
    Multiplying by $16p_2^3$ and denoting $s := 2\lambda p_2 + p_3$ yields
    \begin{align*}
        0 & \geq 16p_2^3p_3^2 - 16p_2^4p_4 + 32\lambda p_2^4p_3 + 16\lambda^2p_2^2(3p_2^3 - p_2p_4) - 32\lambda^3p_2^3p_3 - 16\lambda^4p_2^4 \\
        & = (3p_3^2-4p_2p_4)(p_3^2 + 4p_2^3) + 8sp_3( p_2p_4- p_2^3 - p_3^2) + s^2(12p_2^3 - 4p_2p_4 + 6p_3^2) - s^4 .
    \end{align*}
    Setting $s=0$ yields
    \begin{equation}
        \label{eqn:lambda-step2}
        (3p_3^2 - 4p_2p_4)(p_3^2 + 4p_2^3) \leq 0 .
    \end{equation}
    \Cref{first-prop} implies that $p_3^2 + 4p_2^3 \leq 0$ with equality if and only if, up to a multiplicative constant and a choice of basis,
    \begin{equation*}
        \tfss = \begin{pmatrix}
            1 & 0 & \cdots & 0 & 0 \\
            0 & 1 & \cdots & 0 & 0 \\  
            \vdots & \vdots & \ddots & \vdots & \vdots \\
            0 & 0 & \cdots & 1 & 0 \\
            0 & 0 & \cdots & 0 & 1-n 
        \end{pmatrix} .
    \end{equation*}
    If $p_3^2 + 4p_2^3=0$, then a straightforward computation yields $p_k = 1-k$ for all $k \in \bN$, and hence $p_4 + 3p_2^2 = 0$.
    If instead $p_3^2 + 4p_2^3 < 0$, then Inequality~\eqref{eqn:lambda-step2} implies that $3p_3^2 - 4p_2p_4 \geq 0$.
    Therefore
    \begin{equation*}
        0 \leq 3p_3^2 - 4p_2p_4 < -12p_2^3 - 4p_2p_4 = -4p_2(p_4 + 3p_2^2) .
    \end{equation*}
    Since $p_2<0$, we conclude that $p_4 + 3p_2^2 > 0$.
\end{proof}

Expressing the nonnegative quantity $p_4 + 3p_2^2$ in terms of $\lv A\rv^4$ and $\lv A^2\rv^2$ yields \cref{main-thm}.


\begin{proof}[Proof of \cref{main-thm}]
    Let $\tfss \in \End(V^n)$ be such that $p_1(\tfss)=0$.
    Direct computation gives
    \begin{align*}
        \sigma_2(\tfss) & = -\frac{1}{2}\lv \tfss\rv^2 , \\
        \sigma_4(\tfss) & = \frac{1}{8}\lv \tfss\rv^4 - \frac{1}{4}\lv \tfss^2\rv^2 .
    \end{align*}
    Therefore
    \begin{equation*}
        0 \leq \binom{n}{4}\left( p_4(\tfss) + 3p_2^2(\tfss) \right) = -\frac{1}{4}\left( \lv \tfss^2\rv^2 - \frac{n^2-3n+3}{n(n-1)}\lv \tfss\rv^4 \right) .
    \end{equation*}
    The conclusion readily follows from \cref{main-prop}.
\end{proof}

\section{Geometric applications}
\label{sec:applications}

We conclude by proving our geometric applications of \cref{main-thm}.

\begin{proof}[Proof of \cref{lcf-characterization}]
    Let $i \colon N^n \to (M^{n+1},g)$ be a Riemannian immersion into a locally conformally flat manifold.
    Denote by $\oW$ the Weyl tensor of $i^\ast g$.
    Denote by
    \begin{equation*}
        F := \frac{1}{n-2}\left( \tfss^2 - Gi^\ast g \right) 
    \end{equation*}
    the Fialkow tensor $F \in \Gamma(S^2T^\ast N)$, where $G := \tr_{i^\ast g} F = \frac{1}{2(n-1)}\lv\tfss\rv^2$ is its trace (cf.\ \cite{Fialkow1944}*{Equation~(13.9)}).
    The Gauss--Codazzi equations (cf.\ \cite{Fialkow1944}*{Equation~(22.13)}) imply that
    \begin{equation*}
        \oW = \frac{1}{2}\tfss \wedge \tfss + F \wedge g ,
    \end{equation*}
    where $S \wedge T$ denotes the Kulkarni--Nomizu product
    \begin{equation*}
        (S \wedge T)_{abcd} := S_{ac}T_{bd} + S_{bd}T_{ac} - S_{ad}T_{bc} - S_{bc}T_{ad} .
    \end{equation*}
    Direct computation gives (cf.\ \cite{Labbi2005})
    \begin{align*}
        \lv \tfss \wedge \tfss \rv^2 & = 8\lv\tfss\rv^4 - 8\lv\tfss^2\rv^2 , \\
        \lp \tfss \wedge \tfss, F \wedge g \rp & = -8\lp \tfss^2, F \rp , \\
        \lv F \wedge g \rv^2 & = 4\lp \tfss^2 , F \rp , \\
        \lp \tfss^2, F \rp & = \frac{1}{n-2}\lv\tfss^2\rv^2 - \frac{1}{2(n-1)(n-2)}\lv\tfss\rv^4 .
    \end{align*}
    Therefore
    \begin{equation*}
        \lv\oW\rv^2 = \frac{2(n^2-3n+3)}{(n-1)(n-2)}\lv\tfss\rv^4 -\frac{2n}{n-2}\lv\tfss^2\rv^2 .
    \end{equation*}
    The conclusion readily follows from \cref{main-thm}.
\end{proof}

\begin{proof}[Proof of \cref{catenoid-characterization}]
    Let $i \colon N^n \to M^{n+1}(c)$ be a minimal hypersurface.
    By minimality, the shape operator is trace-free;
    i.e.\ $A = \tfss$.
    \Cref{main-thm} implies that $A$ satisfies Inequality~\eqref{eqn:catenoid-inequality} with equality if and only if $A$ has an eigenvalue of multiplicity at least $n-1$.
    The final conclusion follows from a result~\cite{do2012rotation}*{Corollary~4.4} of do Carmo and Dajczer.
\end{proof}

\begin{proof}[Proof of \cref{rotation-hypersurface-characterization}]
    Let $i \colon N^n \to M^{n+1}(c)$ be a nowhere umbilic immersed hypersurface.
    \Cref{main-thm} implies that $E_{rot}[N] \geq 0$ with equality if and only if at each point $p \in M$ the shape operator $A_p$ has an eigenspace of dimension exactly $n-1$.
    The final conclusion follows from a result~\cite{do2012rotation}*{Theorem~4.2} of do Carmo and Dajczer.
\end{proof}

\begin{proof}[Proof of \cref{integral-catenoid-characterization}]
    Let $i \colon N^n \to M^{n+1}(c)$ be a minimal immersed hypersurface.
    \Cref{catenoid-characterization} implies that $E_{rot}[N] \geq 0$ with equality if and only if $i(N)$ is contained in a catenoid.
\end{proof}

\begin{proof}[Proof of \cref{conformally-invariant-rotational-characterization}]
    Let $i \colon N^n \to (M^{n+1},g)$ be an immersed hypersurface.
    \Cref{main-thm} implies that $E_{rot}^{conf}[N] \geq 0$ with equality if and only if
    \begin{equation*}
        \lvert \tfss\rvert^{n-4}\left( \frac{n^2-3n+3}{n(n-1)}\lvert\tfss\rvert^4 - \lvert\tfss^2\rvert^2 \right) = 0 .
    \end{equation*}
    It follows that for each $p \in N$, either $\tfss_p = 0$ or
    \begin{equation*}
        \lvert \tfss_p^2 \rvert^2 = \frac{n^2-3n+3}{n(n-1)}\lvert\tfss_p\rvert^4 .
    \end{equation*}
    In the former case, $\tfss_p$ has an eigenspace of dimension $n$.
    In the latter case, \cref{main-thm} implies that $\tfss_p$ has an eigenspace of dimension at least $n-1$.
\end{proof}

\section*{Acknowledgements}
We thank Eudes Leite de Lima for pointing out a mistake in an early version of this paper.
JSC was partially supported by the Simons Foundation (Grant \#524601).

\bibliographystyle{amsplain}
\bibliography{bibliography.bib}

\end{document}